\newtheorem{cor}{Corollary}[section]
\newtheorem{proposition}{Proposition}[section] 
\newtheorem{definition}{Definition}[section]
\newtheorem{remark}{Remark}[section]
\newcommand{\Cb}{\mathbb{C}}
\newcommand{\Rb}{\mathbb{R}}
\newcommand{\Zb}{\mathbb{Z}}
\theoremstyle{definition}
\newtheorem{exemplo}{Example}[section]
\begin{document}
\thispagestyle{empty}


\begin{center}
\Large
\textsc{Construction of helices from  Lucas and Fibonacci sequences}
\end{center}


\begin{center}
\textit{Mário M. Gra\c{c}a}, \par
\vspace{3mm}
\begin{tabular}{l}
Departamento de Matem\'{a}tica,\\
Instituto Superior T\'ecnico, Universidade  de Lisboa,\\
 Av. Rovisco Pais,                 
1049--001 Lisboa, Portugal.    \\
e-mail: \texttt{\url{mgraca@math.tecnico.ulisboa.pt}}   
\end{tabular}
\end{center}
\today{}

\medskip
\noindent
\textbf{Abstract:} By means of two complex\--valued functions (depending on an integer parameter $P\geq 1$) we construct  helices of integer ratio $R\geq 1$  related to the so\--called Binet formulae for P\--Lucas and P\--Fibonacci  sequences. Based on these functions a new map is defined and we show that its three\--dimensional representation is also a helix.  After proving that the lattice points of these later helix satisfy certain diophantine Pell's equations we call it a Pell's helix.
 We prove that for P\--Fibonacci  and Pell's helices the respective ratio is an invariant,  contrasting to the P\--Lucas helices whose ratio depends on $P$. It is also shown that suitable linear combinations of certain  complex\--valued maps lead to new helices related to Lucas/Fibonacci/Pell numbers. Graphical examples are given in order to illustrate the underlying theory.

\medskip
\noindent\textbf{Keywords}: Difference equation; helix; double helix;  Binet formulae;  Lucas sequence;  Fibonacci sequence;  diophantine Pell equation.

 \section{Introduction }\label{introd}
 In Section \ref{sec1} we  define two complex\--valued functions $g$ and $h$ which are extended versions of the so\--called sequence Binet formulae  \cite{hoggatt,devlin},  for  two well\--known families  of sequences called  P\--Lucas and P\--Fibonacci sequences,
  where $P\geq 1$ is an integer parameter (cf. \eqref{eq1} and \eqref{eq4}).  
  
  \noindent
  The maps $g$ and $h$ give rise to new complex\--valued functions $\psi_1$, $\psi_2$ (see \eqref{eq13})  for which we  show that their parametric representation in the three\--dimensional space is a helix. We call these helices  P\--Lucas and P\--Fibonacci helices, respectively.
 
 \medskip
 \noindent
 In Proposition \ref{prop1} we prove that all P\--Fibonacci helices have ratio $R=1$, while the P\--Lucas helices have ratio $R=P^2+4\geq 5$, all these helices  having the same pitch $p=2$. The ratio invariance property of P\--Fibonacci helices is fundamental to distinguish these helices from the P\--Lucas ones.

 \medskip
\noindent
 The fact of   the P\--Fibonacci's helices are ratio invariant, whose ratio $R=1$ is minimal relatively to the ratio of the P\--Lucas helices, may  bring a new point of view to a classic controversy on the so\--called omnipresence of the \lq\lq{divine proportion}\rq\rq\ and other \lq\lq{metallic means}\rq\rq
\ (see  for instance \cite{huylebrouck} and bibliography therein) as well as to other issues relevant in the phyllotaxis area of research \cite{adam, stewart}. 
 
\medskip 
 \noindent
In Section \ref{sec2} 
we define another complex\--valued function $\psi_3$ depending on $g$ and $h$. This map enables us to make a connection to certain Pell's equations (see for instance \cite{lenstra}, \cite{cohen} Ch. 6.8). 

\medskip
\noindent
The main results are propositions \ref{prop3} and \ref{prop4}. In Propo\-si\-tion~\ref{prop3}  we  prove that the curves  represented by $\psi_3$  are also ratio invariant helices ($R=4$). We name them  P\--Pell helices.

\medskip
\noindent
 Of course, any expression involving suitable arrangements of the integer terms of Lucas/Fibonacci  sequences (in the literature one can find a lot of such expressions) can be rewritten as a complex\--valued map, say $\psi$, whose geometrical properties might be unveiled using a suitable parametrization of $\psi$.  However, we restrict the scope only to three complex\--valued functions $\psi_1$  $ \psi_2$, defined in \eqref{eq13} (Lucas/Fibonacci) and $\psi_3$, defined in \eqref{eq21} (Pell) for which one can easily prove that they are representable as helices. 
 
 \medskip
 \noindent
 In Proposition \ref{prop4} we define other maps $\psi_4$ to $\psi_7$,  resulting from suitable linear combinations of $\psi_1$ to $\psi_3$,  whose representative curve is also a helix. We conclude that by appropriate linear combinations of the referred complex\--valued maps we are lead to innumerable  other helices related to Lucas/Fibonacci/Pell numbers.

 \medskip
 \noindent
The examples given in Section \ref{sec3} aim to illustrate that the representation of Lucas/Fibonnaci sequences by the complex maps $g$, $h$ and the subsequent maps $\psi_1$, $\psi_2$ and $\psi_3$, all provide tools to highlight  geometrical aspects hidden either in their recursive definition or in the referred classical Binet formulae, expanding the findings  for instance in \cite{falcon}, \cite{stakhov1} and \cite{edson}. 

\medskip
\noindent
In  examples \ref{exemplo1}\--\ref{exemplo4}  some Lucas/Fibonacci/Pell helices are displayed, from which a panoply of double helices can be constructed, as in Example \ref{exemplo3}, suggesting that Lucas/Fibonacci/Pell helices can perhaps  find several applications within the scientific \cite{watson, stewart}, artistic  \cite{heritage} or industrial  \cite{bridge} frameworks.

\medskip
\noindent
We hope that this preliminary work, based on elementary mathematics,  may be a source of inspiration in future studies. The approach,  following the spirit of \cite{needham}, seems to be a unifying setting to  deal with geometrical and analytical aspects namely for difference equations.  
  

\section{Lucas\--Fibonacci sequences}\label{sec1}
 The family of P\--Lucas  sequences  is  defined by
\begin{equation}\label{eq1}
\begin{array}{l}
L_0=2\\
L_1=P\\
L_{n+2}= P\, L_{n+1}+ L_n, \qquad n=0,1,2,\ldots,
\end{array}
\end{equation}
where $P\geq 1$ is an integer.    The characteristic polynomial associated to the difference equation \eqref{eq1} is
 \begin{equation}\label{eq2}
 p(\lambda)=\lambda^2-P\, \lambda-1,
 \end{equation}
whose real roots  are
\begin{equation}\label{eq3}
\lambda_1=\displaystyle \frac{P+\sqrt D}{2}>0, \qquad \lambda_2=\displaystyle \frac{P-\sqrt D}{2}<0, \quad \mbox{with}\quad  D=P^2+4\ .
\end{equation}
The family of P\--Fibonacci sequences is defined by same difference equation of the P\--Lucas family with different initial values. Namely,
\begin{equation}\label{eq5}
\begin{array}{l}
F_0=0\\
F_1=1\\
F_{n+2}= P\, F_{n+1}+ F_n, \qquad n=0,1,2,\ldots \ .
\end{array}
\end{equation}
Both  families share the same characteristic polynomial \eqref{eq2}, and its roots satisfy the following properties which will play a crucial role later:
\begin{equation}\label{eq4}
(i)\quad \lambda_1+\lambda_2=P,\quad 
(ii)\quad \lambda_1-\lambda_2=\sqrt{D}, \quad
(iii)\quad \lambda_1\, \lambda_2= -1\ .
\end{equation}

\medskip
\noindent
 Let us define two complex\--valued functions modelling respectively the diference equations \eqref{eq1} and \eqref{eq5}.
 \begin{definition}\label{def1}
 Given an integer $P\geq 1$, we define the  functions
 
 \noindent
   $g, h: \Rb\mapsto \Cb,$ by
  \begin{equation}\label{eq6} 
 \begin{array}{l}
 g(t)=\lambda_1^t+ \lambda_2^t,  \qquad t\in \Rb,\\
 \\
 h(t)=\displaystyle \frac{1}{\sqrt{D}}\left( \lambda_1^t-\lambda_2^t \right), \quad t\in \Rb\ .
 \end{array}
 \end{equation}
 
\noindent
The functions $g$ and $h$  are   called the (complex) representation of the P\--Lucas sequence and P\--Fibonacci sequence, respectively.
 \end{definition}

\noindent
Note that by \eqref{eq4}\--(iii) we have $\lambda_2=-1/\lambda_1<0$, and so
\begin{equation}\label{eq7}
\lambda_2^t=\displaystyle\frac{1}{\lambda_1^t}\, \left(  -1\right)^t\ .
\end{equation}
Applying the Euler's formula $e^{i \pi}=-1$, where $i=\sqrt{-1}$,  we obtain
\begin{equation}\label{eq8}
\lambda_2^t=\displaystyle\frac{e^{i\, \pi t}}{\lambda_1^t}= \displaystyle \frac{\cos(\pi\, t)+ i\, \sin(\pi\, t)}{\lambda_1^t}\ .
\end{equation}
By  \eqref{eq8} one concludes  that both $g$ and $h$, given by \eqref{eq6},  can be seen as continuous complex\--valued functions defined in $\Rb$. 

\medskip
\noindent
In the sequel the (complex) representation of  P\--Lucas or Fibonacci sequences will be used to illustrate geometrical aspects hidden in their integer recursive definitions \eqref{eq1} or \eqref{eq5}. When $P$ is assumed to be a given constant,  P\--Lucas or P\--Fibonacci sequences will be simply called  Lucas or Fibonacci sequences, respectively.

\medskip
 \noindent
Restricting the real variable $t$ in \eqref{eq6} to the non\--negative integers $\Zb_0^{+}$, the values of  $g$ and $h$ coincide with the
 well\--known and widely used  Binet\footnote{Abraham de Moivre (1667\--1754) first discovered the now called Binet formula also authored by Jacques\--Philippe Marie Binet (1786\--1856). }   formulae, respectively
 \begin{equation}\label{eq9}
 \begin{array}{l}
 L_n=\displaystyle \frac{1}{2^n}\left(\displaystyle (P+\sqrt{D})^n+ (P-\sqrt{D})^n\right),\quad n=0,1,\ldots\\
 \\
F_n=\displaystyle \frac{1}{\sqrt{D}\, 2^n}\left(\displaystyle (P+\sqrt{D})^n- (P-\sqrt{D})^n\right),\quad n=0,1,\ldots \ .\\
\end{array}
 \end{equation}
Given an integer $n$  and $D=P^2+4$ not a perfect square, the expressions in \eqref{eq9} show that we are dealing with rational, irrational and real numbers.
Therefore formulae like \eqref{eq9} hardly  highlight the geometrical aspects provided by their complex representation \eqref{eq6}.  Thus, the consideration of the independent real variable $t$, instead of $n$ used on the traditional Binet formulae \cite{hoggatt,devlin}, enables to extend the functions $g$, $h$ in \eqref{eq6}  to the geometrically rich set of complex numbers $\Cb$ (the geometric properties of standard complex\--valued maps is comprehensively treated in \cite{needham}).

 \medskip
 \noindent
 Let us begin by setting the notion of a three\--dimensional curve and define what we mean by a P\--Lucas or P\--Fibonacci curve.
  
 \begin{definition}\label{def2} (P\--Lucas/Fibonacci  curves)
 
 \noindent
 Given a function $f:\Rb\mapsto\Cb$, we define the curve $\gamma_f: \Rb\mapsto \Rb^3$ by $$\gamma(t)=\left(x(t),y(t),z(t)\right),$$
 where
  \begin{equation}\label{eq11}
  \left\{
 \begin{array}{l}
  x(t)= {\cal R}e\,  f(t) \\
 y(t)={\cal I}m\, f(t)\\
 z(t)=t,  \hspace{4cm} t\in \Rb\ .
 \end{array}
  \right.
 \end{equation}
When  $f=g$ or $f=h$, given in \eqref{eq6}, the corresponding curve \eqref{eq11} will be called a P\--Lucas or P\--Fibonacci curve. In the case the value of $P$ is fixed  we simply call the curve associated to $g$  (resp. $h$)  Lucas curve  (resp. Fibonacci curve).
  \end{definition}
  
 \medskip
 \noindent
 In the particular instance such that
\begin{equation}\label{eq11C}
\left\{
\begin{array}{l}
x(t)= {\cal R}e\,  f(t)= R\, \cos(\pi\, t)\\
y(t)= {\cal I}m\, f(t)= R\, \sin(\pi\, t) \\
z(t)=t\hspace{4cm} t\in \Rb,
 \end{array}
 \right.
\end{equation}
 where $R>0$, the function $f$ is representable by the 3\--dimensional helix of ratio $R$ and pitch $2$. Indeed, recall that
the parametric equations
 \begin{equation}\label{eq12}
 \left\{
 \begin{array}{l}
 x(\theta)=R\, \cos(\theta)\\
 y(\theta)=R\, \sin(\theta)\\
 z(\theta)=\displaystyle \frac{p}{2\, \pi} \, \theta,    \hspace{3cm} \theta\geq 0
 \end{array}
 \right.
 \end{equation}
 define the (rectangular) coordinates of the points of a helix with polar angle $\theta$, ratio $R>0$, and pitch $p>0$. 
 
 \medskip
  \noindent
Helices like \eqref{eq11C}  and other helicoidal or spiral curves are widely present in the nature as well as  in numerous mankind achievements (see for instance \cite{capanna}, \cite{watson}, \cite{heritage}, \cite{bridge}).

 \begin{remark}\label{remA}
  \end{remark}
  \noindent If the parameter $t$ in \eqref{eq11C} is an integer $k$ the component $z(k)=k$. Thus, in order that a point of the corresponding curve $\gamma_f$ be a lattice point (that is, a point with integer coordinates) we should have $x(k)$, $y(k)\in \Zb$. Therefore, a point ${\cal P}_j$ belonging to the helix \eqref{eq11C}  is a lattice point  if and only if $R\in \Zb$, in which case
\begin{equation}\label{eq11B}
{\cal P}_j=(\pm R,0,j),\qquad j\in \Zb\ .
\end{equation}

 \noindent
 Note that in \eqref{eq11C} we have considered the parameter $t\in \Rb$. In the examples given in Section \ref{sec3} we restrict $t\in[t_{min},t_{max}]$, for given integers $t_{min}<t_{max}$.   
 
 \medskip
 \noindent
 The value $p$ in \eqref{eq12} represents  the vertical displacement of a point in the helix after an angular increment of $2\, \pi$.
 The points of the helix lie in a cylinder with ratio $R$,  the point $(0,0,0)$ being the origin of the coordinate system. Of course, for $\theta=0$, the point $(R,0,0)$ is on the helix.
 \noindent
 Consequently, a parametric curve given by $x(t)=R\cos(\pi t)$, $y(t)=R\sin(\pi t)$, $z(t)=  t$, with $t_{min}\leq t\leq t_{max}$, represents the helix of polar ratio $R$ and pitch $2$, on  a cylinder of radius $R$ and height  $t_{max}-t_{min}$.

  \medskip
 \noindent In the following Proposition \ref{prop1} we discuss two maps $\psi_1$ and $\psi_2$ (depending on $g,h$ defined by \eqref{eq6}) which have the interesting property of being representable as helices.

 \begin{proposition}\label{prop1}
 Let $g$ and $h$ be respectively  P\--Lucas and P\--Fibonacci curves. Consider the complex\--valued functions
 \begin{equation}\label{eq13}
 \begin{array}{l}
 \psi_1(t)=g^2(t)- g(t-1)\, g(t+1)\\
 \psi_2(t)=h(t-1)\, h(t+1) - h^2(t),\, \qquad t\in \Rb\ .\\ 
 \end{array}
 \end{equation}

 \noindent
 (i) The curves of  $\psi_1$ and $\psi_2$
are helices of  polar angle $\theta=\pi\, t\,\,\mbox{(rad)}$ and (common) pitch $p=2$.
We call the curve associated to  $\psi_1(t) $ a P\--Lucas helix and the one associated to  $\psi_2(t) $  a P\--Fibonacci.

\medskip
\noindent
(ii)
For any  integer $P\geq1$ a P\--Lucas helix and a P\--Fibonacci one do not intersect.

\medskip
\noindent
(iii) \label{cor2}     For any  integer $P\geq1$, the P\--Lucas sequence verify the equalities
\begin{equation}\label{eq13a}
L_k^2-L_{k-1}\, L_{k+1}= (P^2+4)\, (-1)^k, \qquad k=0,1,2,\ldots
\end{equation}

\medskip
\noindent
(iv)  For any  integer $P\geq1$, the P\--Fibonacci sequence verify the equalities
\begin{equation}\label{eq13b}
F_{k-1}\, F_{k+1}- F_k^2=  (-1)^k, \qquad k=0,1,2,\ldots
\end{equation}
\end{proposition}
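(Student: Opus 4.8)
The plan is to reduce everything to a single elementary computation: expand $\psi_1$ and $\psi_2$ in the powers $\lambda_1^t,\lambda_2^t$ and simplify using only the three relations \eqref{eq4}. First I would record the auxiliary identity $\lambda_1^2+\lambda_2^2=(\lambda_1+\lambda_2)^2-2\lambda_1\lambda_2=P^2+2$, together with $\lambda_1^t\lambda_2^t=(\lambda_1\lambda_2)^t=(-1)^t$. Expanding $g^2(t)$ and $g(t-1)\,g(t+1)$, the ``diagonal'' part $\lambda_1^{2t}+\lambda_2^{2t}$ is common to both and cancels in the difference $\psi_1(t)$, leaving
$\psi_1(t)=2(-1)^t+(-1)^{t}(P^2+2)=(P^2+4)(-1)^t$.
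By \eqref{eq7}--\eqref{eq8} one has $(-1)^t=e^{i\pi t}=\cos(\pi t)+i\sin(\pi t)$, so $x(t)=\Real\psi_1(t)=(P^2+4)\cos(\pi t)$, $y(t)=\Imag\psi_1(t)=(P^2+4)\sin(\pi t)$, $z(t)=t$, which is precisely the template \eqref{eq11C} with $R=P^2+4$ and, comparing with \eqref{eq12}, pitch $p=2$. The same expansion for $h$ — now with the cross terms entering with the opposite sign and an overall factor $1/D$ — gives $\psi_2(t)=\frac1D\big((-1)^t(P^2+2)+2(-1)^t\big)=(-1)^t=e^{i\pi t}$, i.e. the helix with $R=1$ and $p=2$. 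This establishes (i).

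For (ii), I would simply observe that the two helices lie on coaxial cylinders of radii $|\psi_1(t)|=P^2+4\ge 5$ and $|\psi_2(t)|=1$; a common point would have to lie on both cylinders, which is impossible since $P^2+4>1$ for every integer $P\ge1$.

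For (iii) and (iv), restrict $t$ to an integer $k$ and invoke the Binet formulae \eqref{eq9}, which say exactly $g(k)=\lambda_1^k+\lambda_2^k=L_k$ and $h(k)=(\lambda_1^k-\lambda_2^k)/\sqrt D=F_k$. Substituting into the closed forms just obtained yields $L_k^2-L_{k-1}L_{k+1}=(P^2+4)(-1)^k$ and $F_{k-1}F_{k+1}-F_k^2=(-1)^k$, which are \eqref{eq13a} and \eqref{eq13b}. The only point requiring a word of care is $k=0$, where one uses the natural backward values $L_{-1}=g(-1)=(\lambda_1+\lambda_2)/(\lambda_1\lambda_2)=-P$ and $F_{-1}=h(-1)=1$ (consistent with the recurrences \eqref{eq1} and \eqref{eq5}); a direct check shows both identities still hold.

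There is no real obstacle here: the proposition packages one short algebraic identity, and the only delicate bookkeeping is tracking the signs in the expansions of $g(t-1)g(t+1)$ and $h(t-1)h(t+1)$ and keeping the complex interpretation of $\lambda_2^t$ from \eqref{eq7}--\eqref{eq8} consistent, so that $\psi_1,\psi_2$ are genuinely maps $\Rb\to\Cb$ and Definition \ref{def2} applies. Once the closed forms $\psi_1(t)=(P^2+4)e^{i\pi t}$ and $\psi_2(t)=e^{i\pi t}$ are in hand, all four assertions follow immediately.
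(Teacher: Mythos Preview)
Your proposal is correct and follows essentially the same route as the paper: expand $g^2(t)$, $g(t-1)g(t+1)$, $h^2(t)$, $h(t-1)h(t+1)$ in $\lambda_1^t,\lambda_2^t$, use $\lambda_1\lambda_2=-1$ and $\lambda_1^2+\lambda_2^2=P^2+2$ to obtain the closed forms $\psi_1(t)=(P^2+4)(-1)^t$ and $\psi_2(t)=(-1)^t$, read off the helix parameters from \eqref{eq11C}--\eqref{eq12}, and specialize to integers for (iii)--(iv). Your coaxial-cylinder phrasing of (ii) and your explicit handling of $k=0$ via $L_{-1}=-P$, $F_{-1}=1$ are minor cosmetic additions, but the argument is the paper's own.
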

 
 \noindent
The  ratio $R$ of the helices corresponding to the maps $\psi_1$ and $\psi_2$ is summarised  in Table \ref{tab1} as well as the ratio of a helix corresponding to a map $\psi_3$ to be introduced in Section \ref{sec2}.

 \begin{proof}

\noindent
(i)   From \eqref{eq6} and \eqref{eq4}\--(iii), we have
\begin{equation}\label{eq14a}
\begin{array}{l}
g^2(t)=\lambda_1^{2 \, t}+ \lambda_2^{2\, t}+ 2\,( \lambda_1\, \lambda_2)^t= \lambda_1^{2 \, t}+ \lambda_2^{2\, t}+ 2\,(-1)^t,
\end{array}
\end{equation}
and
$$
g(t-1)\, g(t-2)=\lambda_1^{2\, t}+\lambda_2^{2\, t}+\lambda_1^{t-1}\, \lambda_2^{t-1}\, \left(\lambda_2^2+\lambda_1^2 \right)\ .
$$
Taking into account that $\lambda_1+\lambda_2=P$,  $\lambda_1^2=P\, \lambda_1+1$ and $\lambda_2^2=P\,\lambda_2+1$, we get
$$ \lambda_1^2 +\lambda_2^2 =P\, (\lambda_1+\lambda_2)+2=P^2+2\ .$$
Since $\lambda_1\, \lambda_2=-1$, one obtains
\begin{equation}\label{eq14b}
\begin{array}{ll}
g(t-1)\, g(t+1)&=\lambda_1^{2\, t}+\lambda_2^{2\, t}+ (P^2+2)\, (-1)^{t-1}\\
&=\lambda_1^{2\, t}+\lambda_2^{2\, t}- (P^2+2)\, (-1)^{t}\ .\\
\end{array}
\end{equation}
Thus, from \eqref{eq14a} and \eqref{eq14b}, we have
\begin{equation}\label{eq14}
\begin{array}{ll}
\psi_1(t)&= g^2(t)-  g(t-1)\, g(t+1) =(P^2+4)\, (-1)^t=D \, (-1)^t\\\
& =D\, \left(\cos(\pi\, t)+ i\,\sin(\pi\, t)\right),\qquad t\in \Rb\ .
\end{array}
\end{equation}
Therefore, the following  curve
\begin{equation}\label{eq15}
\left\{
\begin{array}{l}
x(t)= {\cal R}e\, (\psi_1(t))= D\, \cos (\pi\, t)\\
y(t)= {\cal I}m\, (\psi_1(t))= D\,\sin (\pi\, t)\\
z(t)=t,\hspace{6cm}\qquad t\in \Rb
\end{array}
\right.
\end{equation}
is a helix of ratio $D=P^2+4$ and pitch $2$.

\medskip
\noindent
From \eqref{eq6} we have
\begin{equation}\label{eq16}
D\, h^2(t)= \lambda_1^{2\, t}+\lambda_2^{2\, t}-2 \, (\lambda_1\, \lambda_2)^t= \lambda_1^{2\, t}+\lambda_2^{2\, t}-2 \, (-1)^t
\end{equation}
and 
\begin{equation}\label{eq17}
\begin{array}{ll}
h(t-1)\, h(t+1)&=\displaystyle \frac{1}{D}\left( \lambda_1^{2\, t}+ \lambda_2^{2\, t} -(\lambda_1\, \lambda_2)^{t-1}\, (\lambda_1^1+\lambda_2^2)\right)\\
\\
&=\displaystyle \frac{1}{D}\left( \lambda_1^{2\, t}+ \lambda_2^{2\, t} - (P^2+2)\,(\lambda_1\, \lambda_2)^{t-1} \right)\\
\\
&=\displaystyle \frac{1}{D}\left( \lambda_1^{2\, t}+ \lambda_2^{2\, t} - (P^2+2)\,(-1)^{t-1} \right)\ .\\
\end{array}
\end{equation}
Thus,
$$
D\, \left( h(t-1)\, h(t+1)   \right)=  \lambda_1^{2\, t}+ \lambda_2^{2\, t}+ (P^2+2)\, (-1)^t,
$$
and so
\begin{equation}\label{eq18}
D\, \left( h(t-1)\, h(t+1) -h^2(t)  \right)= (P^2+4)\, (-1)^t=D\, (-1)^t\ .
\end{equation}
That is,
\begin{equation}\label{eq19}
\psi_2(t)= h(t-1)\, h(t+1) -h^2(t) =  (-1)^t.
\end{equation}
Using the same reasoning as before, the last equality shows that $\psi_2(t)$ can be written as  a hélix with ratio $R=1$ and pitch $2$.

\medskip
\noindent
(ii)
From \eqref{eq14} and \eqref{eq19} we conclude that
\begin{equation}\label{eq20}
\psi_1=\, D\, \psi_2(t)= (P^2+4)\, \psi_2(t), \qquad t\in \Rb \ .
\end{equation}
This means that a Lucas helix has a ratio that is $P^2+4$ greater than the ratio of the Fibonacci helix and the conclusion follows.

\medskip
\noindent
(iii) Follows from\eqref{eq14} for  $t=0,1,2,\ldots\ .$

\medskip
\noindent
(iv)
Follows from \eqref{eq19} for $t=0,1,2,\ldots\ .$
\end{proof}

\begin{table}
$$
\begin{array}{|l|c|c| }
\hline
\hspace{2cm} \mbox{Map}& R\,\, \mbox{(Ratio)} & \mbox{Helix name}\\
 \hline
\psi_1(t)=g^2(t)- g(t-1)\, g(t+1)&P^2+4=D&\mbox{Lucas}\\
 \hline
\psi_2(t)=h(t-1)\, h(t+1) -h^2(t)& 1&\mbox{Fibonacci}\\
 \hline
 \psi_3(t)=g^2(t)- D\, h^2(t)& 4&\mbox{Pell}\\
  \hline
\end{array}
$$
 \caption{Lucas/Fibonacci/Pell helices.  
  \label{tab1} }
\end{table}

 \begin{remark}\label{rem4} 
  \end{remark} 
  
  \noindent
  The curves $\psi_1$ and $\psi_2$ defined in \eqref{eq13} are not the unique helices one can associate respectively to Lucas or Fibonacci sequences. For instance, from
 $$
 g(2\, t)=\lambda_1^{2\, t}+\lambda_2^{2\, t}
 $$
 and
 $$
 g^2(t)=\lambda_1^{2\, t}+\lambda_2^{2\, t}+ 2\, (\lambda_1\, \lambda_2)^t
 $$
 we get
 $$
 g(2\, t)-g^2(t)= -2\, (\lambda_1\, \lambda_2)^t= 2\, (-1)^t
 $$
 and so
 \begin{equation}\label{eq41}
  {\cal L} (t)= g(2\, t)-g^2(t),
   \end{equation}
   is a map representable as a helix (with ratio $R=2$) also associated to P\--Lucas sequences.
   
   \medskip
   \noindent
Also, from \eqref{eq41} we get
\begin{equation}\label{eq42}
L_k^2-L_{2\,k}= 2\, (-1)^k, \qquad k\in \Zb
\end{equation}
which is an extension of a well known relationship among terms of  P\--Lucas sequences (\cite{cohen}, p. 421, Pro\-po\-si\-tion 6.8.1). How\-ever, in this pre\-li\-mi\-na\-ry work, we will not discuss further matters related to the map   ${\cal L}$ given in \eqref{eq41} as well as  other elaborations on the referred unicity issue.

\medskip
\noindent
  New helices associated to the maps $\Psi_1$ and $\Psi_2$ will be defined in the next section.

\section{Some helices related to diophantine equations of Pell }\label{sec2}

The next definition characterizes a certain helix which we name  Pell's helix  due to its connection to two famous quadratic diophantine equations, namely  $u^2- b\, v^2=4$ and $u^2- b\, v^2=-4$, where $b>0$ is a no perfect square integer  and $u,v$ are integers (for the Pell\--Fermat equation $u^2- b\, v^2=N$ see for instance \cite{cohen}, p. 354).

\medskip
\noindent
Consider the complex\--valued function
\begin{equation}\label{eq21}
\psi_3(t)=g^2(t)- D\, h^2(t), \qquad t\in \Rb\ .
\end{equation}
\noindent
 As shown in Proposition \ref{prop3} the function $\psi_3$ can be written as a helix of ratio $R=4$.

 \begin{definition}\label{def4}
A Pell's helix (of ratio 4), is a helix  parametrized by
 \begin{equation}\label{eq22}
 \begin{array}{l}
 x(t)= 4\, \cos(\pi\, t)\\
 y(t)=4\, \sin (\pi\, t)\\
 z(t)=  t, \qquad\qquad  t\in \Rb\ .
 \end{array}
 \end{equation}.
 \end{definition} 
\begin{proposition}\label{prop3}
Let $L_k$ and $F_k$ be the $k$\--th terms of the  sequences given in \eqref{eq9} and $\psi_3$ as in \eqref{eq21}.

\noindent
(i) The function $\psi_3$ can be written as  Pell's helix  \eqref{eq22}.

\medskip
\noindent
(ii) For $j$ integer, the following points ${\cal P}$ and ${\cal Q}$  are the only lattice points for the Pell's helix \eqref{eq22}:
\begin{equation}\label{eq23}
\begin{array}{ll}
{\cal P}_j=(4,0, j)&\mbox{for}\,\, j\,\, \mbox{even} \\
{\cal Q}_j=(-4,0, j)&\mbox{for}\,\, j\,\, \mbox{odd} \ .\\
\end{array}
\end{equation}

\medskip
\noindent
(iii) The point $(L_k,F_k)$ satisfies respectively the diophantine equation $x^2-D\, y^2=4$ for $k$ even, and  $x^2-D\, y^2=-4$ for $k$ odd.
\end{proposition}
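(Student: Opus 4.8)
The plan is to reduce all three parts to the closed form of $\psi_3$ that is already latent in the computations of Proposition \ref{prop1}. For part (i), I would expand $\psi_3(t)=g^2(t)-D\,h^2(t)$ directly using \eqref{eq14a} and \eqref{eq16}: the first yields $g^2(t)=\lambda_1^{2t}+\lambda_2^{2t}+2(-1)^t$ and the second yields $D\,h^2(t)=\lambda_1^{2t}+\lambda_2^{2t}-2(-1)^t$. Subtracting cancels the common term $\lambda_1^{2t}+\lambda_2^{2t}$ and leaves $\psi_3(t)=4\,(-1)^t=4\left(\cos(\pi t)+i\sin(\pi t)\right)$ by Euler's formula, exactly in the style of \eqref{eq14}. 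Reading off real and imaginary parts gives the parametrization \eqref{eq22}, which by the discussion around \eqref{eq12} is a helix of ratio $R=4$ and pitch $p=2$.

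For part (ii), I would invoke Remark \ref{remA}: on a curve of the form \eqref{eq11C} the $z$-coordinate equals the parameter, so a lattice point forces $t=j\in\Zb$; then $x(j)=4\cos(\pi j)=4(-1)^j$ and $y(j)=4\sin(\pi j)=0$. Hence the only candidates are $(4,0,j)$ for $j$ even and $(-4,0,j)$ for $j$ odd, and since $R=4\in\Zb$ these points do lie on the helix. One should also note explicitly that a non-integer $t$ makes $z(t)=t$ non-integral, so the enumeration $\mathcal{P}_j,\mathcal{Q}_j$ in \eqref{eq23} is complete.

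For part (iii), the key observation is that restricting $t$ to a non-negative integer $k$ makes $g(k)$ and $h(k)$ real, and by the Binet formulae \eqref{eq9} (cf. the remarks preceding Definition \ref{def2}) they coincide with $L_k$ and $F_k$ respectively. Therefore $\psi_3(k)=g^2(k)-D\,h^2(k)=L_k^2-D\,F_k^2$, while part (i) gives $\psi_3(k)=4(-1)^k$. Equating the two expressions yields $L_k^2-D\,F_k^2=4$ when $k$ is even and $L_k^2-D\,F_k^2=-4$ when $k$ is odd, that is, $(x,y)=(L_k,F_k)$ solves $x^2-D\,y^2=4$ in the even case and $x^2-D\,y^2=-4$ in the odd case.

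I do not expect a genuine obstacle here: the argument is a short specialization of earlier identities. The only point demanding a little care is the justification that the continuous extensions $g,h$ of \eqref{eq6} really restrict at integers to the classical Binet values $L_k,F_k$ of \eqref{eq9} (so that substituting $t=k$ is legitimate), but this is already recorded in the text; everything else is routine bookkeeping with $(-1)^k$.
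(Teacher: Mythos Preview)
Your proposal is correct and follows essentially the same route as the paper: part (i) is obtained by subtracting \eqref{eq16} from \eqref{eq14a} to get $\psi_3(t)=4(-1)^t$, part (ii) is read off from Remark~\ref{remA} with $R=4$, and part (iii) follows by specializing $t=k\in\Zb$ and identifying $g(k)=L_k$, $h(k)=F_k$ via \eqref{eq9}. Your write-up is, if anything, slightly more explicit than the paper's (e.g.\ in noting that non-integer $t$ is excluded because $z(t)=t$), but there is no methodological difference.
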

\begin{proof} 

\noindent
(i)  From \eqref{eq14a} and \eqref{eq16} we have
\begin{equation}\label{eq24}
g^2(t)- D\, h^2(t) = 4\, (-1)^t,
\end{equation}
and so the function $\psi_3$ can be written as the helix given by \eqref{eq22}.

\medskip
\noindent
(ii) We have here a particular case of the curve in \eqref{eq11C} with $R=4$ or $R=-4$ (see remark \ref{remA}). So the lattice points corresponding to \eqref{eq11B} are giving  by \eqref{eq23}.

\medskip 
\noindent
(iii) For $t\in \Zb$ the numbers $g(t)$, $D$ and $h(t)$ are integers.
When $t$ is even, say $t=2\,k$ with $k\in \Zb$, let $x=g(t)$, $y=h(t)$. By \eqref{eq24} we get $x^2-D\, y^2=4$. When $t$ is odd the same reasoning leads to  $x^2-D\, y^2=-4$.
\end{proof}

\noindent
Considering the maps given in Table \ref{tab1} one concludes immediately that a suitable linear combination of $\psi_1$ to $\psi_3$ leads to new maps \--- denoted by $\psi_4$ to $\psi_7$ in Table \ref{tab2} \--- which are also representable by helices,  whose ratio $R_m$ is given in the second column of that table.
Translating the tabulated maps by its expressions in terms of the maps $g$ and $h$, by direct computation from the definitions given for the maps $\psi_m$, for $m=4$ to $m=7$,  immediately one justifies the following proposition and so we omit its proof. Of course other helices, related to P\--Lucas/Fibonacci/Pell sequences, having ratio  congruent to $0,1,2,\ldots, P^2+3$ $ \pmod D$, could also be defined by means of a convenient linear combination of $\psi_1, \psi_2$ and $\psi_3$.
\begin{table}
$$
\begin{array}{ |l |c| c |}
\hline
\hspace{2cm} \mbox{Map}& R_m\,\, \mbox{(Ratio)} & \mbox{Connection}\\
 \hline
\psi_4(t)=\psi_1(t)-(D-1)\, \psi_2(t)&1& \mbox{Fibonacci\--Lucas}\\
 \hline
\psi_5(t)= \psi_3(t)- 2\, \psi_2(t)& 2& \mbox{Pell\--Fibonacci}\\
 \hline
 \psi_6(t)=\psi_3(t)- \psi_2(t)& 3& \mbox{Pell\--Fibonacci}\\
  \hline
   \psi_7(t)=2 D\, \psi_2(t)- \psi_1(t)& D=P^2+4& \mbox{Fibonacci--Lucas}\\
  \hline
\end{array}
$$
 \caption{Maps $\psi_4$ to $\psi_7$.  
  \label{tab2} }
\end{table}

\begin{proposition}\label{prop4} The following complex\--valued maps $\psi_4$ to $\psi_7$, expressed in terms of $g$, $h$ given in \eqref{eq6}, are all representable by helices whose ratio $R_m$ is summarized  in Table \ref{tab2}.
\begin{equation}\label{eq30}
\begin{array}{l}
\psi_4(t)=\psi_1(t)- (D-1)\, \psi_2(t)=\\
\hspace{1cm} =g^2(t)-g(t-1)\, g(t+1)- (P^2+3)\, \left( h(t-1)\, h(t+1)-h^2(t)    \right) =\\
\hspace{1cm}=(-1)^t,   \\
\\
\psi_5(t)=\psi_3(t)-2\, \psi_2(t)=\\
\hspace{1cm} =g^2(t)-(P^2+2)\, h^2(t)- 2\, h(t-1)\, h(t+1)= 2\, (-1)^t,    \\
\\
\psi_6(t)= \psi_3(t)- 2\, \psi_2(t) =   \\
\hspace{1cm} =g^2(t)-(P^2+4)\, h^2(t)-h(t-1)\, h(t+1)= 3\, (-1)^t,   \\
\\
\psi_7(t)= 2D\, \psi_2(t)-\psi_1(t))=   \\
\hspace{1cm} =g(t-1)\,g(t+1)- (g^2(t)+h^2(t))+2\, h(t-1)\, h(t+1)=\\
\hspace{1cm} =D\, (-1)^t=(P^2+4)\, (-1)^t, \qquad t\in \Rb\ .
\end{array}
\end{equation}

\end{proposition}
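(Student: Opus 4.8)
The plan is to reduce each of $\psi_4$ through $\psi_7$ to the scalar form $c_m(-1)^t$ by combining the closed‐form expressions already established in Proposition \ref{prop1} and Proposition \ref{prop3}, and then invoke the same reasoning used there (the identity $(-1)^t=\cos(\pi t)+i\sin(\pi t)$ from \eqref{eq8}) to conclude that the associated curve \eqref{eq11} is the helix \eqref{eq11C} of ratio $|c_m|$ and pitch $2$. Since Proposition \ref{prop1}(i) gives $\psi_1(t)=D\,(-1)^t$ and $\psi_2(t)=(-1)^t$, and Proposition \ref{prop3}(i) gives $\psi_3(t)=4\,(-1)^t$, each $\psi_m$ is manifestly a real multiple of $(-1)^t$; the only content is checking that the advertised coefficients $R_m$ in Table \ref{tab2} are correct and that the intermediate rewriting in terms of $g$ and $h$ in \eqref{eq30} is accurate.

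First I would handle $\psi_4=\psi_1-(D-1)\psi_2$: substituting the closed forms gives $D\,(-1)^t-(D-1)(-1)^t=(-1)^t$, so $R_4=1$. For the rewriting in \eqref{eq30}, expand $\psi_1(t)=g^2(t)-g(t-1)g(t+1)$ and $(D-1)\psi_2(t)=(P^2+3)\bigl(h(t-1)h(t+1)-h^2(t)\bigr)$ using $D=P^2+4$. Then $\psi_5=\psi_3-2\psi_2=4(-1)^t-2(-1)^t=2(-1)^t$, so $R_5=2$; here the rewriting uses $\psi_3(t)=g^2(t)-D\,h^2(t)$ together with $D\,h^2(t)=(P^2+2)h^2(t)+2\bigl(h^2(t)-\tfrac{1}{?}\cdots\bigr)$ — more cleanly, just note $D\,h^2(t)=(P^2+4)h^2(t)$ and regroup $2\psi_2$ against it, which is routine algebra. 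For $\psi_6=\psi_3-\psi_2=4(-1)^t-(-1)^t=3(-1)^t$, so $R_6=3$. Finally $\psi_7=2D\psi_2-\psi_1=2D(-1)^t-D(-1)^t=D(-1)^t$, so $R_7=D=P^2+4$; the rewriting in \eqref{eq30} follows from $\psi_1(t)=g^2(t)-g(t-1)g(t+1)$ and from the identity $2D\,h^2(t)=2\bigl(g^2(t)+\cdots\bigr)$ obtained by eliminating $\lambda_1^{2t}+\lambda_2^{2t}$ between \eqref{eq14a} and \eqref{eq16}; in fact $g^2(t)-D\,h^2(t)=4(-1)^t$ gives $D\,h^2(t)=g^2(t)-4(-1)^t$, which one uses to convert the $h^2$ term into a $g^2$ term plus a multiple of $\psi_2$.

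Each conversion then finishes identically: a map of the form $c_m(-1)^t=c_m\bigl(\cos(\pi t)+i\sin(\pi t)\bigr)$ has real part $c_m\cos(\pi t)$ and imaginary part $c_m\sin(\pi t)$, so by Definition \ref{def2} its curve is $x(t)=c_m\cos(\pi t)$, $y(t)=c_m\sin(\pi t)$, $z(t)=t$, which matches \eqref{eq11C} with $R=|c_m|$; comparing with \eqref{eq12} identifies the pitch as $p=2$. I would note explicitly (as in the proof of Proposition \ref{prop1}(i)) that the sign of $c_m$ is immaterial because $-\cos(\pi t)=\cos(\pi(t+1))$ and $-\sin(\pi t)=\sin(\pi(t+1))$, so a negative coefficient merely reparametrizes the same helix; here all $c_m$ are positive anyway.

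There is essentially no obstacle: the only place one can go wrong is bookkeeping in the explicit $g,h$-expansions in \eqref{eq30} — for instance making sure the coefficient $(P^2+3)$ rather than $(P^2+4)$ appears in $\psi_4$, and that the $h^2$-coefficients $P^2+2$ (for $\psi_5$) and $P^2+4$ (for $\psi_6$) are transcribed consistently with the definition $\psi_3=g^2-D h^2$ — but all of this is mechanical substitution of \eqref{eq6}, \eqref{eq14a}, \eqref{eq16}, \eqref{eq17} together with $\lambda_1^2+\lambda_2^2=P^2+2$ and $\lambda_1\lambda_2=-1$, exactly as carried out in the proof of Proposition \ref{prop1}. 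For this reason the proof can indeed be omitted, as the statement asserts; if one wished to include it, it would amount to four one-line verifications of the form ``$\psi_m=$ (linear combination of $D(-1)^t$, $(-1)^t$, $4(-1)^t$) $=R_m(-1)^t$, hence a helix of ratio $R_m$ and pitch $2$ by the argument of Proposition \ref{prop1}(i)''.
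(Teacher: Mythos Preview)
Your proposal is correct and matches the paper's approach exactly: the paper explicitly omits the proof, remarking just before the proposition that the result follows ``by direct computation from the definitions given for the maps $\psi_m$'', i.e.\ by substituting $\psi_1(t)=D(-1)^t$, $\psi_2(t)=(-1)^t$, $\psi_3(t)=4(-1)^t$ into the listed linear combinations and then invoking the helix parametrization as in Proposition~\ref{prop1}(i), which is precisely what you outline.
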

\begin{cor}\label{cornew} Replacing the real variable $t$ in \eqref{eq30}, by the integer variable $k\in \Zb$, the P\--Lucas/Fibonacci sequences satisfy the following equalities
\begin{equation}\label{eq31}
\psi_m(k)=R_m\, (-1)^k,\qquad m=4,\ldots,7,
\end{equation}
where $R_m$ denotes the ratio of the helix corresponding to the map $\psi_m$ displayed in Table \ref{tab2}. That is,
\begin{equation}\label{eq32}
\begin{array}{l}
L_k^2- L_{k-1}\, L_{k+1}- (P^2+3)\, (F_{k-1}\, F_{k+1}-F_k^2) =(-1)^k,\\
\\
L_k^2- (P^2+2)\, F_k^2-2\, F_{k-1}\, F_{k+1}=2\, (-1)^k,\\
\\
L_k^2- (P^2+3)\, F_k^2-2\, F_{k-1}\, F_{k+1}=3\, (-1)^k,\\
\\
L_{k-1}\, L_{k+1}-(L_k^2+F_k^2)+ 2\, F_{k-1}\,F_{k+1}=(P^2+4) \, (-1)^k\qquad k\in \Zb\ . \\
\end{array}
\end{equation}
\end{cor}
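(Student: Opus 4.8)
\textbf{Proof plan for Corollary \ref{cornew}.}

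The plan is to observe that Corollary \ref{cornew} is an immediate specialization of Proposition \ref{prop4}, so the work consists only in substituting $t=k\in\Zb$ into the identities \eqref{eq30} and translating the values $g(k)$, $h(k)$ into Lucas and Fibonacci numbers. First I would recall, from the discussion following Definition \ref{def1} (equations \eqref{eq6}--\eqref{eq9}), that for every nonnegative integer $k$ one has $g(k)=L_k$ and $h(k)=F_k$; indeed restricting $t$ to $\Zb_0^+$ makes $g$ and $h$ coincide with the Binet formulae \eqref{eq9}. Consequently $g(k-1)\,g(k+1)=L_{k-1}L_{k+1}$, $h(k-1)h(k+1)=F_{k-1}F_{k+1}$, $g^2(k)=L_k^2$, $h^2(k)=F_k^2$, and likewise for the remaining products appearing in \eqref{eq30}. (For negative integer indices one uses the standard convention extending the recurrences \eqref{eq1} and \eqref{eq5} backwards, under which the Binet formulae \eqref{eq9}, and hence $g(k)=L_k$, $h(k)=F_k$, persist for all $k\in\Zb$.)

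Next I would invoke Proposition \ref{prop4} directly: it asserts that $\psi_m(t)=R_m\,(-1)^t$ for all $t\in\Rb$, where the ratios $R_m$ are those listed in Table \ref{tab2}, namely $R_4=1$, $R_5=2$, $R_6=3$, $R_7=D=P^2+4$. Setting $t=k\in\Zb$ gives \eqref{eq31} at once, since $(-1)^t$ restricted to integers is just $(-1)^k$. Finally, substituting the dictionary $g(k)=L_k$, $h(k)=F_k$ into the explicit $g,h$-expressions for $\psi_4,\dots,\psi_7$ in \eqref{eq30} produces the four displayed identities in \eqref{eq32}: the first comes from $\psi_4$, the second from $\psi_5$, the third from $\psi_6$, and the last from $\psi_7$. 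Each translation is purely notational once Proposition \ref{prop4} is in hand.

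There is essentially no obstacle: the content is entirely contained in Proposition \ref{prop4}, whose proof the paper has already indicated follows by direct computation from the definitions of $g$ and $h$. The only point deserving a word of care is the case $k<0$ (and $k\in\{0,1\}$, where $L_{k-1},F_{k-1}$ reach into nonpositive indices), which is handled by the remark above that the extended recurrences keep the Binet identities valid on all of $\Zb$; alternatively one may simply restrict the statement to $k\ge 1$, which suffices for all the applications made in the sequel. Hence the corollary follows.
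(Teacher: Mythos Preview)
Your proposal is correct and matches the paper's approach exactly: the paper gives no separate proof of the corollary, treating it as an immediate consequence of Proposition~\ref{prop4} obtained by substituting $t=k\in\Zb$ and using $g(k)=L_k$, $h(k)=F_k$. Your added remark on negative indices is a welcome clarification that the paper leaves implicit.
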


\noindent
The literature on Lucas and Fibonacci numbers is vast. Restricting $k$ in \eqref{eq32}  to non\--negative integers one obtains relations between Lucas and Fibonacci numbers which are certainly known in the case of $P=1$.


 \begin{figure}[h]
\begin{center} 
 \includegraphics[totalheight=7.0cm]{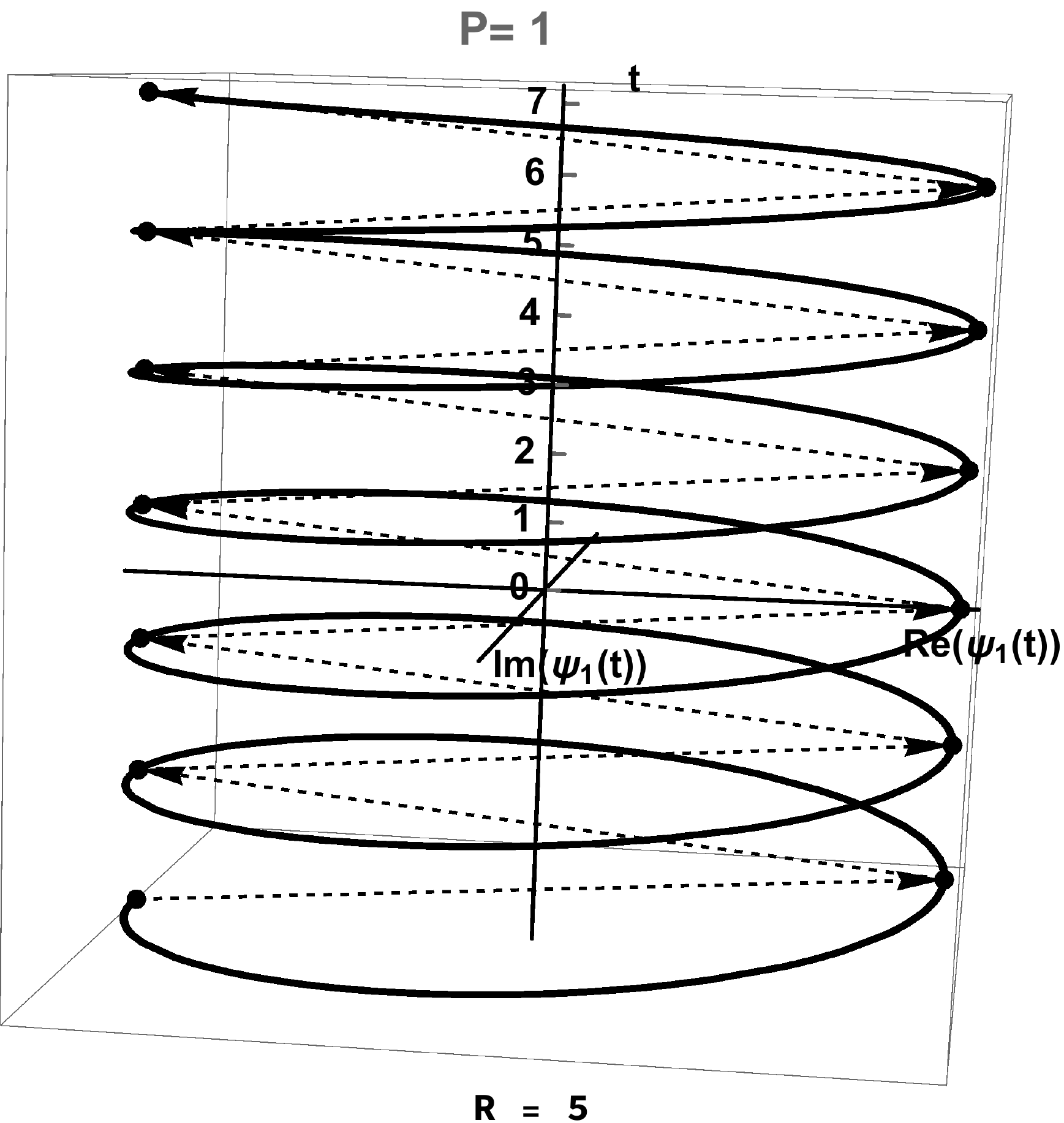}
\caption{\label{fig1}  The helix for the function  $\psi_1(t)$, with $P=1$ and  $-5\leq t\leq 7$.}
\end{center}
\end{figure}

  \begin{figure}[h]
\begin{center} 
 \includegraphics[totalheight=7.0cm]{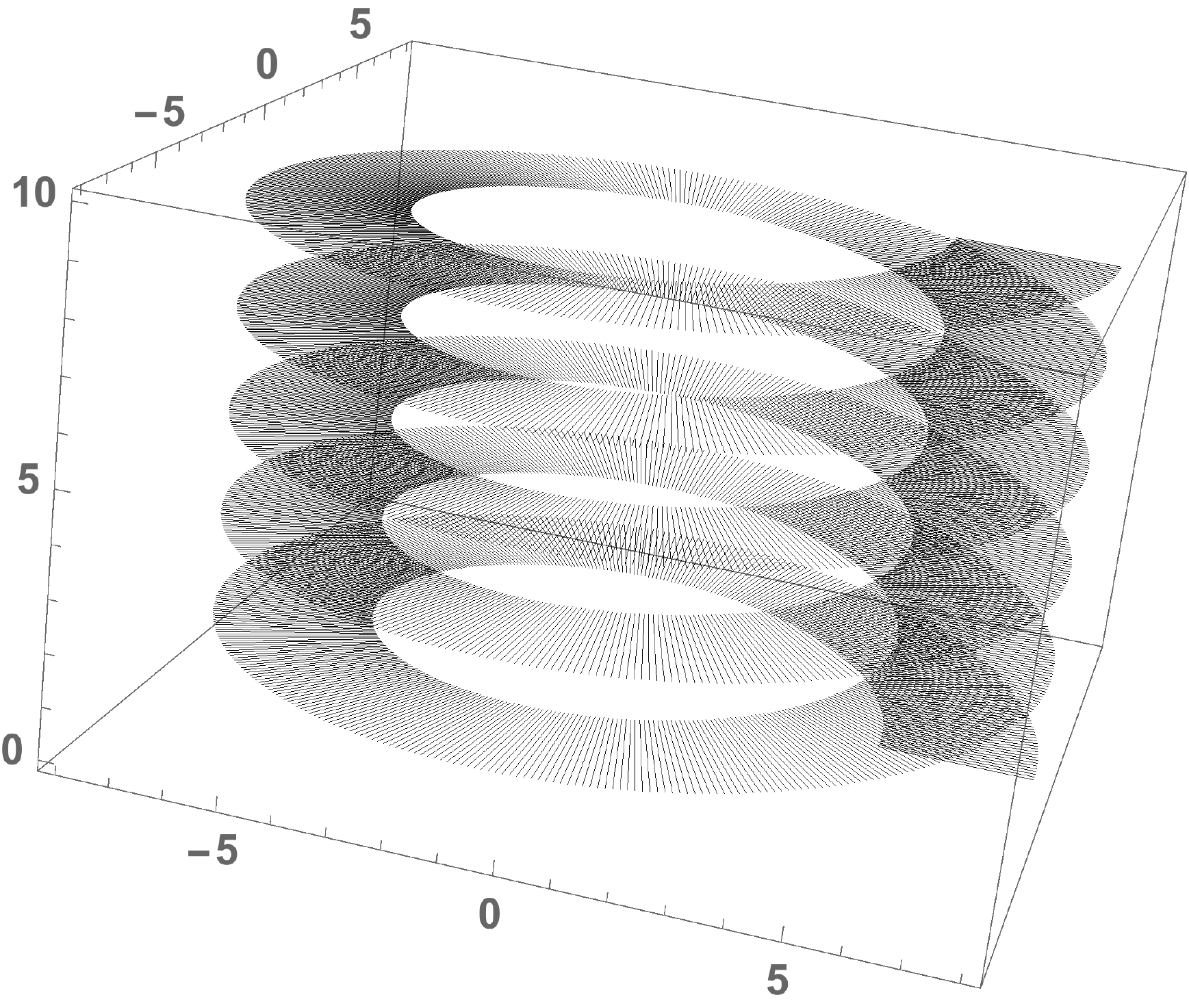}
\caption{\label{fig2}  Double helix: 1\--Lucas and 2\--Lucas  $(0\leq t\leq 10$).}
\end{center}
\end{figure}

\section{Examples}\label{sec3}
In the next examples we apply the complex expressions  $g(t),h(t), \psi_1(t),\psi_2(t)$ and $\psi_3(t)$, respectively defined in \eqref{eq6}, \eqref{eq13} and \eqref{eq21},  for several values of the integer parameter $P\geq 1$.

\medskip
\noindent
 Given integers $t_{min}< t_{max}$ we considerer the interval $[t_{min}, t_{max}]$ for domain of the real independent variable $t$. The corresponding curves has been obtained by computing the real and imaginary parts of the maps $g, h, \psi_1,\psi_2$ and $\psi_3$. The computer algebra system {\sl Mathematica} \cite{wolfram1} has been used to display the corresponding helical curves, by means of the built\--in commands $\verb+Re[ ]+$, $\verb+Im[ ]+$ and $\verb+ParametricPlot3D[ ]+$.
 
 \medskip
 \noindent
 The resulting  helices are presented in order to confirm and illustrate  Propositions \ref{prop1} and \ref{prop3}. We also introduce some double helices resulting from a combination of Lucas/Fibonacci helices. Other double and triple helices will be discussed in a forthcoming work.

\begin{exemplo}\label{exemplo1}
\end{exemplo}

\noindent
For $P=1$ we have $D=P^2+4=5$.  Lucas sequence is $$2,1,3,4,7,11,18,29,\ldots,$$ and  the Fibonacci sequence is $$0,1,1,2,3,5,8,13,21,\ldots \ .$$

\noindent
The respective maps $g$ and $h$ are
\begin{equation}\label{eq25}
\begin{array}{l}
g(t)= \displaystyle  \frac{1}{2^t}\left( (1+\sqrt{5})^t+  (1-\sqrt{5})^t \right)\\
\\
h(t)= \displaystyle  \frac{1}{\sqrt{5}\, 2^t}\left( (1+\sqrt{5})^t-  (1-\sqrt{5})^t \right), \qquad t\in \Rb \ .
\end{array}
\end{equation}

\noindent
The functions $\psi_1$, $\psi_2$ and $\psi_3$ have the following expressions:
\begin{equation}\label{eq26}
\begin{array}{ll}
\psi_1(t)=g^2(t)-g(t-1)\, g(t-2)= 5\, (-1)^t&\mbox{ (Lucas)}\\
\psi_2(t)=h(t-1)\, h(t+1)- h^2(t)= (-1)^t &\mbox{ (Fibonacci)} \\
\psi_3(t)=g^2(t)- D\, h^2(t)= 4\, (-1)^t, \quad t\in \Rb\quad &\mbox{ (Pell),}\\
\end{array}
\end{equation}
which show that these maps are representable as helices with ratios 5, 1 and 4 respectively. 

\medskip 
 \noindent
 For $P=1$, the Lucas helix is displayed in Figure \ref{fig1}, where the respective  lattice points are shown by means of bold black points. As theoretically expected it is clear that the displayed dashed arrows connecting two consecutive lattice points belong to the vertical plane defined by the $x$ and $z$ axes.

 \begin{exemplo}\label{exemplo2}
  \end{exemplo}
 \begin{figure}[h]
\begin{center} 
 \includegraphics[totalheight=7.4cm]{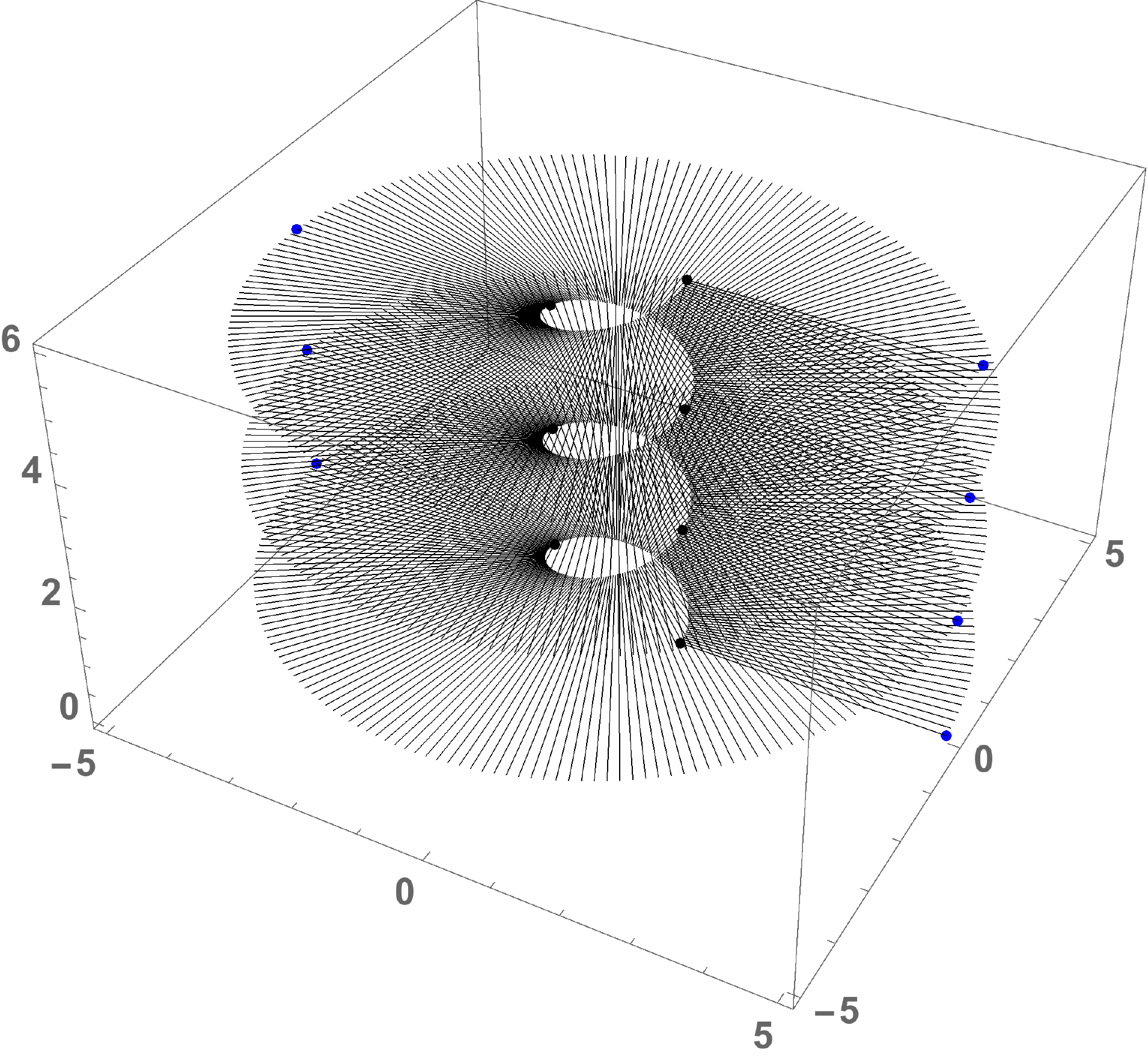}
\caption{\label{fig3}  Double helix: 1\--Lucas to 2\--Fibonacci.}
\end{center}
\end{figure}

\noindent
 In Figure \ref{fig2} we show a double helix. It results from connecting the point $\psi_1(t)$ with $P=1$ to the point $\psi_1(t)$  with  $P=2$, for $t\in[0,10]$. The respective graph was obtained discretising the variable $t$ taking  $t_{min}=0$ to $t_{max}=10$ and increments $dt=0.005$.  The inner helix (with $P=1$) has ratio $R=p^2+4=5$, while the outer helix (with $P=2$) has ratio $2^2+4=8$, as expected.

 \newpage
 \begin{exemplo}\label{exemplo3}
 \end{exemplo}

 \noindent
Using analogous technique as in Example \ref{exemplo2},  the next double helix (Figure~\ref{fig3}) connects 
a 1\--Lucas helix to a 2\--Fibonacci helix. Thus the resulting double helix has interior ratio $R=1$ (for 2\--Fibonacci) and exterior ratio $R=5$ (for 1\--Lucas). Figure \ref{fig3A} displays a lateral (right) view of the helix in Figure \ref{fig3}, confirming that its lattice points  belong to the vertical plane $xz$.
 \begin{figure}[h]
\begin{center} 
 \includegraphics[totalheight=5.2cm]{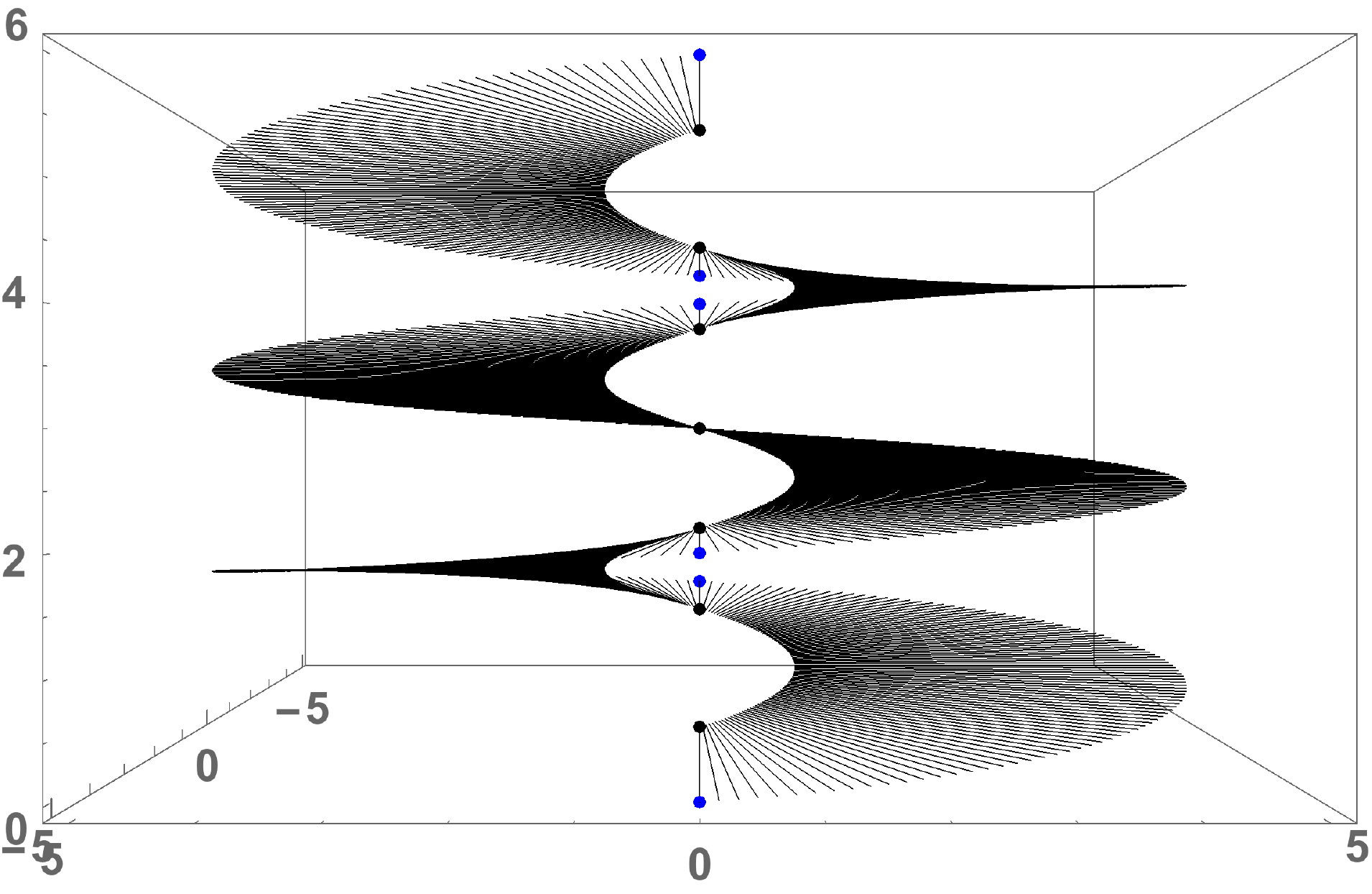}
\caption{\label{fig3A}  Double helix: 1\--Lucas to 2\--Fibonacci  (right view).}
\end{center}
\end{figure}
 \begin{figure}[h]
\begin{center} 
 \includegraphics[totalheight=7.0cm]{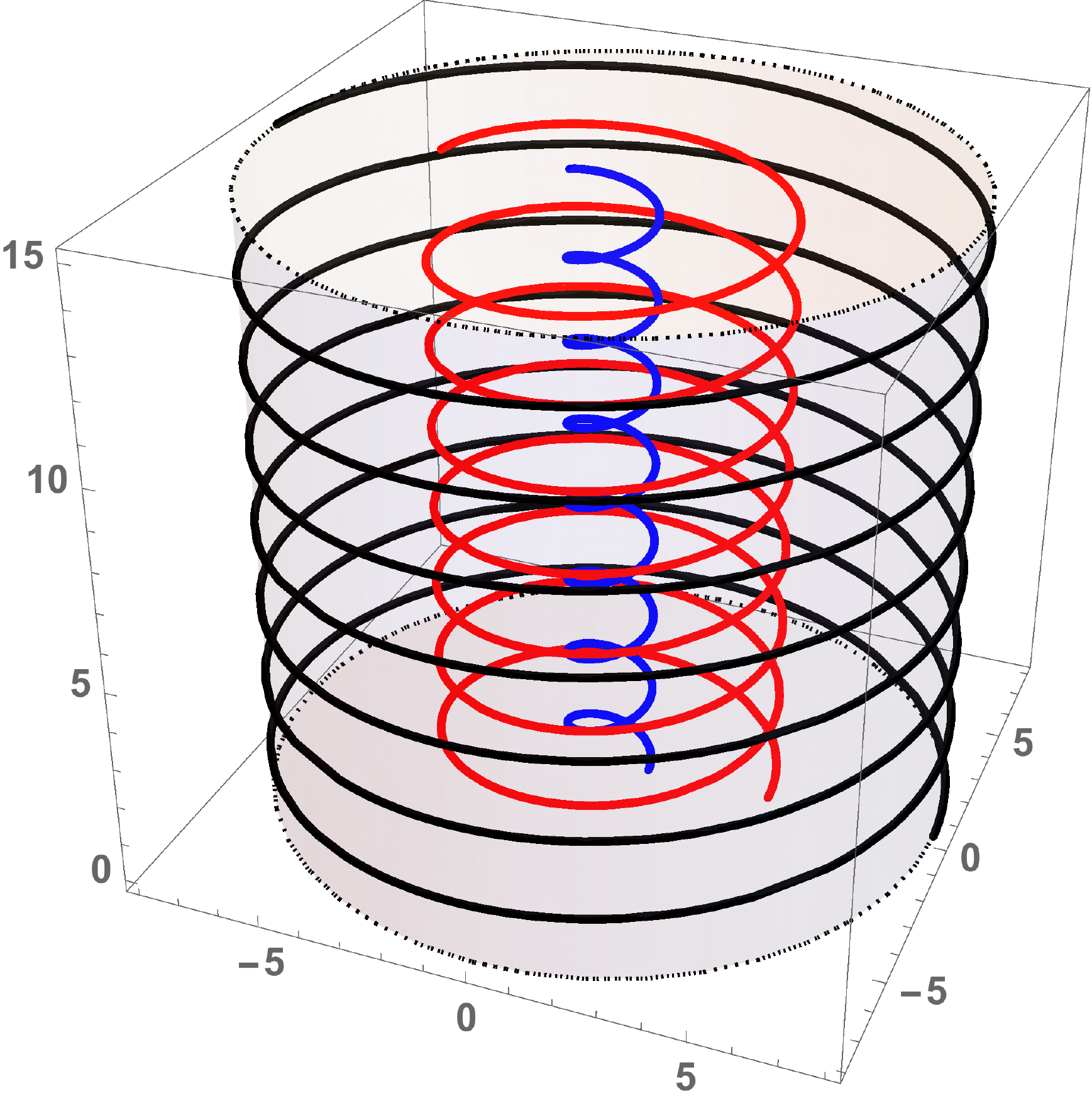}
\caption{\label{fig4}  Helices of Lucas (black), Pell (red) and Fibonacci (blue), for $P=2$.}
\end{center}
\end{figure}

\begin{exemplo}\label{exemplo4}
\end{exemplo}

\noindent In Figure \ref{fig4} we show a cylinder containing three helices, for $0\leq t\leq 15$ and  $P=2$. The outer helix results from $\psi_1(t)= 8\, (-1)^t$ (Lucas) and the inner one from $\psi_2(t)= (-1)^t$ (Fibonacci). As expect the middle helix corresponds to  $\psi_3(t)= 4\, (-1)^t$ (Pell). 
The ratios of these helices are  in accordance to the values of $R$ in  Table \ref{tab1}. 

 \newpage

\end{document}